\newcommand{\sgn}{\mathrm{sgn}}
\renewcommand{\IJ}{\operatorname{IJ}}
\DeclareMathOperator{\J}{J}
\DeclareMathOperator{\Adj}{Adj}
\renewcommand{\p}{\mathbb P}
\renewcommand{\phi}{\varphi}
\newcommand{\na}{\mathrm{Cr}}
\newcommand{\aut}{\mathrm{Aut}}
\newcommand{\pic}{\mathrm{Pic}}
\newcommand{\kk}{\mathbf k}
\begin{document}
    \title{Equivariant rationality of Fano threefolds in the family \textnumero 2.12}
	\author{Oliver Li, Joseph Malbon and Antoine Pinardin}
	\begin{abstract}
		We prove that a faithful group action on the smooth complete intersection $X$ of three divisors of bidegree $(1,1)$ in $\p^3\times\p^3$ is linearisable if and only if $\rk(\pic^G(X))\ne1$.
	\end{abstract}
	\maketitle
    \section{Introduction}
    All the work presented in this note is performed over an algebraicaly closed field $\kk$ of characteristic zero. 
    \subsection{$G$-rational Fano threefolds}
    Let $X$ be a rational variety of dimension $n$. Given a subgroup $G\subset\aut(X)$, a classical problem is to ask if $X$ is \textit{$G$-rational}, that is, if there exists a $G$-birational map $\phi\colon X\dashrightarrow\p^n$. This is equivalent to the condition that (for any birational identification of $X$ and $\P^n$) the group $G$ is conjugate in $\na_n(\kk)$ to a subgroup of $\aut(\p^n)$. In this case, we will also say that the group $G$ (or more precisely its action on $X$) is \textit{linearisable}. We are interested in the $G$-rationality of Fano threefolds. When $G$ stands for the Galois group of a non-closed field, this question has been fully answered by Kuznetsov-Prokhorov \cite{kuznetsov2019rationality}. They use a generalization to non-closed fields by Benoist-Wittenberg \cite{benoist2019clemens} of the intermediate Jacobian obstruction provided by Clemens and Griffiths in \cite{clemens1972intermediate}. In the present note, we utilise the Clemens-Griffiths criterion in a $G$-equivariant setting for geometric group actions over an algebraically closed field. We apply this technique for Fano threefolds in the deformation Family $\textnumero 2.12$. See \cite{hassett2021equivariant} and \cite{ciurca2024intermediate} for other applications of this method.

    \subsection{Fano threefolds in the family \textnumero 2.12}
    A Fano threefold $X$ in family \textnumero 2.12 is the smooth complete intersection of three divisors of bidegree $(1,1)$ in $\P^3 \times \P^3$. We may write the three equations in the form
    \begin{equation}\label{eqn: eqns}
        \mathbf{x}^T M_i \mathbf{y} = 0,
    \end{equation}where $M_i$ is a four-by-four matrix, $i$ varies from $1$ to 3, and $\mathbf{x} = [x_0:\dots:x_3]$ and $\mathbf{y} = [y_0:\dots:y_3] $ are the coordinates of a respective copy of $\P^3$.

    Now consider $\pi\colon X \rightarrow \P^3$, the first projection map. This map is the blowup along a non-hyperelliptic curve $C \subseteq \P^3$ of genus 3 and degree 6. Similarly, the second projection $\pi'\colon X \rightarrow \P^3$ is the blowup of a second curve $C'\subseteq \P^3$. The curves are isomorphic because they are both isomorphic to the plane quartic \[\det(xM_1 + yM_2 +zM_3) = 0 \] where the $M_i$ are as in \eqref{eqn: eqns}.

    Now let $G$ be a group acting faithfully on $X$. Recall that $X$ is \textit{$G$-Fano} if $\Pic^G(X) = \Z[-K_X ]$. Our main result, answering a question of Cheltsov-Li-Ma'u-Pinardin \cite[Remark 6]{2-12}, is the following.
    \begin{theorem}\label{thm: not-equivariant}
        Let $G$ be a group acting faithfully on $X$. Then the $G$-action on $X$ is linearisable if and only if $X$ is not $G$-Fano. 
    \end{theorem}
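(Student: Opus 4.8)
The plan is to split the biconditional along the dichotomy for the rank of $\pic^G(X)$. Since $\pic(X)=\Z H\oplus\Z H'$ has rank two, where $H=\pi^*\mathcal O(1)$ and $H'=\pi'^*\mathcal O(1)$, and since $-K_X=H+H'$ by adjunction, every $g\in G$ acts on $\pic(X)$ by an isometry preserving the nef cone and its two extremal rays $H,H'$; hence $g$ either fixes both classes or interchanges them. Thus $\rk\pic^G(X)=2$ exactly when $G$ preserves each of the two projections, and $\rk\pic^G(X)=1$ (so $\pic^G(X)=\Z[-K_X]$, i.e.\ $X$ is $G$-Fano) exactly when some element of $G$ swaps the two $\p^3$ factors.

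For the direction ``not $G$-Fano $\Rightarrow$ linearisable'', suppose $G$ preserves $\pi\colon X\to\p^3$. Then $G$ descends to an action on the base by automorphisms, so $G\hookrightarrow\aut(\p^3)=\mathrm{PGL}_4(\kk)$, and $\pi$ is a $G$-equivariant birational morphism (a blowup). Composing with $\pi$ exhibits $X$ as $G$-birational to $\p^3$ with a linear action, so the $G$-action is linearisable.

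For the converse, suppose $X$ is $G$-Fano and assume for contradiction that $X$ is $G$-rational. I would first invoke the blowup formula to identify $(J(X),\Theta)\cong(J(C),\Theta_C)$ as principally polarised abelian varieties, where $C$ is the non-hyperelliptic genus-$3$ plane quartic. Since $C$ is non-hyperelliptic its theta divisor is irreducible, so $(J(C),\Theta_C)$ is an indecomposable ppav, and by Torelli $\aut(J(C),\Theta_C)=\aut(C)\times\{\pm1\}$. The $G$-equivariant Clemens--Griffiths criterion then forces $(J(X),\Theta,G)$ to be a $G$-equivariant sum of Jacobians of curves carrying $G$-actions; indecomposability collapses this to a single curve, necessarily isomorphic to $C$, with the $G$-action on $J(C)$ induced by automorphisms of $C$. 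Projecting each $g_*$ to the central factor $\{\pm1\}$ defines a sign character $\epsilon\colon G\to\{\pm1\}$ which is independent of the chosen identification $J(X)\cong J(C)$ (the factor is central, so the projection is conjugation invariant), and for a curve-induced action $\epsilon\equiv1$.

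The crux is therefore to show $\epsilon(g)=-1$ for any $g\in G$ that exchanges the two projections, which yields the contradiction. Here I would track the two identifications $J(X)\cong J(C)$ and $J(X)\cong J(C')$ arising from the rulings of the exceptional divisors $E,E'$, together with the isomorphisms $C\cong\Gamma\cong C'$ with the determinantal quartic $\Gamma=\{\det(xM_1+yM_2+zM_3)=0\}$ sending a point of $\Gamma$ to the left, respectively right, kernel of the corresponding matrix. A factor-swapping $g$ intertwines $\pi$ with $\pi'$ and acts on $\Gamma$ as the transpose $M_i\mapsto M_i^T$, which exchanges the two cokernel line bundles $\mathcal L=\operatorname{coker}M$ and $\mathcal L'=\operatorname{coker}M^T$. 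Because $\mathcal L\otimes\mathcal L'$ equals a fixed line bundle $N$ independent of the representation, this exchange is $L\mapsto N\otimes L^{-1}$, i.e.\ inversion up to translation, so $g$ lands in the $-1$ coset of $\aut(C)\times\{\pm1\}$ and $\epsilon(g)=-1$. This contradicts $\epsilon\equiv1$, so $X$ is not $G$-rational. I expect this sign computation---pinning down that the factor swap realises the $(-1)$-involution on the Jacobian rather than a genuine automorphism of $C$---to be the main obstacle, since it is precisely where the non-hyperelliptic hypothesis enters and where the determinantal geometry of the family must be made rigorous.
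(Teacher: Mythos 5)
Your overall architecture matches the paper's: the dichotomy on $\operatorname{Pic}^G(X)$ via the two extremal nef classes, the easy direction via the $G$-equivariant blow-down, and, for the hard direction, an equivariant Clemens--Griffiths/Torelli argument (which, as in the paper, needs equivariant weak factorisation to be quoted) reducing everything to showing that a factor-swapping $\sigma$ acts on $\operatorname{IJ}_X\cong\operatorname{J}_C$ in the $(-1)$-coset of $\operatorname{Aut}(C)\times\{\pm1\}$. Your character $\epsilon$ is an equivalent packaging of the paper's statement that $\operatorname{Lie}(\operatorname{IJ}_X)$ and $\operatorname{Lie}(\operatorname{J}_C)$ differ by the sign character.

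The gap is exactly in the step you flag as the crux, and it is a genuine one rather than a technicality. Knowing that the factor swap exchanges the two cokernel line bundles $\mathcal L$ and $\mathcal L'$ with $\mathcal L\otimes\mathcal L'\cong N$ fixed is a statement about where two specific points of $\operatorname{Pic}^6(\Gamma)$ go; it determines only the ``translation part'' of the induced affine map on that torsor and says nothing about the linear part, i.e.\ about the induced endomorphism of $\operatorname{Pic}^0(\Gamma)=\operatorname{J}_\Gamma$ or of $H^1(\Gamma,\mathcal O_\Gamma)$. So ``this exchange is $L\mapsto N\otimes L^{-1}$, hence $\epsilon(g)=-1$'' does not follow: an honest automorphism $\beta$ of $\Gamma$ with $\beta^*\mathcal L\cong N\otimes\mathcal L^{-1}$ would also interchange the two determinantal representations, yet has $\epsilon(\beta)=+1$. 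The information ``the swap permutes the two determinantal data'' is symmetric and cannot by itself distinguish the two cosets. What must actually be computed is the action of $\sigma$ on $H^2(X,\Omega^1_X)$ relative to the blow-up identification with $H^1(C)$, compared with the action induced by the linear action on the net $\mathbb P^2\supset\Gamma$. The paper does this by resolving the conormal sequence with the equivariant Koszul complex of $X\subseteq\mathbb P^3\times\mathbb P^3$, reducing to the one-dimensional $H^6(\mathcal O_{\mathbb P^3\times\mathbb P^3}(-4,-4))$, where the sign is precisely the Koszul/K\"unneth sign $(-1)^{3\cdot 3}=-1$ from swapping $\eta_x\in H^3(\mathbb P^3,\mathcal O(-4))$ with $\eta_y$. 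Some computation of this kind --- or an equally precise identification of $\operatorname{IJ}_X$ with a torsor of determinantal representations compatible with the $\sigma$-actions on both sides --- is unavoidable; as written, your argument does not establish $\epsilon(\sigma)=-1$.
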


    If $X$ is not $G$-Fano, then the result is almost trivial, so the content of the result is the converse direction. Our approach is to isolate an element $\sigma\in G$ which leaves only $K_X$ invariant, and compute the $\sigma$-action on the Lie algebra of the intermediate Jacobian $\IJ_X$ of $X$. We then show that this differs by a sign from the action on the Lie algebra of $\J_C$, the Jacobian of $C$, where the action on $C$ is induced in a natural way. Since $C$ is a plane quartic, this would imply the result.
    \subsection{Acknowledgements} 
    We are extremely grateful to Ivan Cheltsov for bringing us together, introducing us to the problem, for his guidance and for suggesting the use of the intermediate Jacobian. We would also like to thank the organisers of the 2024 conference in Algebraic Geometry held at the Matrix Institute in Victoria, Australia and the Sydney Mathematical Research Institute, where this work was initiated. OL is supported by the Australian Commonwealth Government and would like to thank his advisor Jack Hall for several helpful discussions and suggestions. He would further like to thank Dougal Davis, Peter McNamara and Muhammad Haris Rao for their help on the project.
    \section{On the geometry of the family \textnumero 2.12} 
     In this section, we recall what we need about the geometry of our threefold. The bulk of this section is obtained from \cite[\S 4.1.2]{CAG}, but the reader can also refer to \cite[\S 4]{2-12}. Let $C\subseteq \P^2$ be a plane quartic curve and $D$ a degree 2 divisor such that $H^0(\O_C(D)) = 0$. Then $K_C+D$ is very ample \cite{Homma}. Set $V := H^0(\O_C(K_C+D))$ and $W := H^0(\O_C(2K_C-D))$. Then by \cite[Theorem 4.1.4]{CAG}, there is a linear map \[\rho\colon V^\vee \otimes W^\vee \rightarrow H^0(\P^2, \O_{\P^2}(1))\] such that the following diagram commutes 
     \begin{equation}\label{eqn: huge-fkn-diagram}
         \begin{tikzcd}[row sep = huge]
         C \arrow[r, hook] \arrow[d, "|K_C+D| \times |2K_C-D|"] & \P^2 \arrow[r, "\rho^\vee"] & \P(V \otimes W) \arrow[d, "\mathrm{Adj}"] \\
         \P(V^\vee) \times \P(W^\vee) \arrow[rr, "\mathrm{Segre}"] && \P(V^\vee \otimes W^\vee).
     \end{tikzcd}
     \end{equation}
     Set $X := \P(\ker \rho)\cap  (\P(V^\vee) \times \P(W^\vee))\subseteq \P(V^\vee \otimes W^\vee)$.
    \begin{proposition}\label{prop: 3fold-facts}
        The following hold:
        \begin{enumerate}[label = \normalfont(\roman*)]
            \item\ The curve $C$ is exactly the vanishing locus \[C = \{\det\circ \rho^\vee = 0\} \subseteq \P^2. \] Moreover, $\rk \rho^\vee(x) \geq 3$ for all $x\in \P^2$.
            \item The projection $\pi\colon X \rightarrow \P(V^\vee)$ is the blowup along the curve $C$ embedded in $\P(V^\vee)$ via the complete linear system $|K_C + D|$. In particular, $X$ is a smooth Fano threefold, and is a complete intersection in $\P(V^\vee) \times \P(W^\vee)$.
            \item Conversely, every smooth complete intersection of three (1,1) divisors in $\P^3 \times \P^3$ is obtained this way. 
        \end{enumerate}
    \end{proposition}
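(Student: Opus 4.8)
The plan is to reduce all three statements to the theory of linear determinantal representations of the plane quartic $C$, with the commuting diagram \eqref{eqn: huge-fkn-diagram} of \cite[Theorem 4.1.4]{CAG} as the main input. First I would record the numerics. A plane quartic has genus $3$ and $K_C=\mathcal O_C(1)$, so $\deg(K_C+D)=\deg(2K_C-D)=6$; as both bundles have degree $>2g-2=4$, Riemann--Roch gives $\dim V=\dim W=4$ with $H^1=0$. Hence $\P(V^\vee)\times\P(W^\vee)\cong\P^3\times\P^3$, and writing $\rho^\vee(x)=x_1M_1+x_2M_2+x_3M_3$ for the coordinate matrices $M_i\in V\otimes W\cong\operatorname{Hom}(V^\vee,W)$, a pair $([s],[t])$ lies in $X$ exactly when $s\otimes t\in\ker\rho$, i.e.\ when $s^{T}M_i t=0$ for $i=1,2,3$. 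This realises $X$ as the complete intersection of three $(1,1)$-divisors in the form \eqref{eqn: eqns}.

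For (i) the key is the adjugate. Along the left-bottom path of \eqref{eqn: huge-fkn-diagram} a point $c\in C$ maps to the rank-one tensor $[\operatorname{ev}_c\otimes\operatorname{ev}_c']\in\P(V^\vee\otimes W^\vee)$, so by commutativity $[\Adj(\rho^\vee(c))]$ is a well-defined point of rank one. Since a $4\times4$ matrix $A$ has $\rk\Adj(A)=1$ precisely when $\rk A=3$ (and $\Adj(A)=0$ once $\rk A\le 2$), commutativity forces $\rk\rho^\vee(c)=3$ for every $c\in C$; in particular $\det\rho^\vee$ vanishes on $C$. As $\det\rho^\vee$ is a quartic form, not identically zero because $\rho^\vee$ is the determinantal datum of $C$ supplied by \cite[Theorem 4.1.4]{CAG}, and as $C$ is a reduced irreducible quartic, a degree count gives $C=\{\det\rho^\vee=0\}$ as schemes. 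Off $C$ we then have $\det\rho^\vee(x)\ne0$, so $\rk\rho^\vee(x)=4$; combined with the above this gives $\rk\rho^\vee(x)\ge3$ for all $x\in\P^2$.

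For (ii) I would analyse the fibres of $\pi$ directly. The fibre over $[s]\in\P(V^\vee)$ is $\P$ of the annihilator of $\langle M_1s,M_2s,M_3s\rangle\subseteq W$, equivalently of the image of the $4\times3$ matrix $N_s=[\,M_1s\mid M_2s\mid M_3s\,]$; since $N_su=\rho^\vee(u)\,s$, this fibre is a single point when $\rk N_s=3$ and a line $\P^1$ when $\rk N_s\le2$. Now $\rk N_s\le2$ means $\rho^\vee(u)\,s=0$ for some $0\ne u\in U$, which by (i) happens exactly when $u\in C$ and $s$ spans the one-dimensional $\ker\rho^\vee(u)$. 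Tracing \eqref{eqn: huge-fkn-diagram} once more identifies $[\ker\rho^\vee(c)]$ with $[\operatorname{ev}_c]$, so the locus of jumping fibres is the image of the $|K_C+D|$-embedding $C\hookrightarrow\P(V^\vee)$, and $\pi$ is an isomorphism over its complement. Granting that the rank condition of (i) forces $X$ to be smooth --- the complete intersection can be singular only where $\rho^\vee$ drops to rank $\le 2$ --- a standard blow-up recognition criterion (a birational morphism of smooth varieties whose exceptional locus is a $\P^1$-bundle over a smooth curve is the blow-up of that curve) identifies $\pi$ with the blow-up of $\P^3$ along $C$.

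For (iii) I would run the construction backwards: from a smooth complete intersection $X=\{s^{T}M_i t=0\}$ form $\rho^\vee(x)=\sum_i x_iM_i$ and set $C=\{\det\rho^\vee=0\}$, which smoothness of $X$ forces to be a smooth plane quartic on which $\rho^\vee$ is a determinantal representation with line-bundle cokernel $L$ of degree $6$. Putting $D:=L\otimes\mathcal O_C(-1)$ of degree $2$, the characterisation of which line bundles occur gives $h^0(\mathcal O_C(D))=h^0(L(-1))=0$, and unwinding the definitions recovers $X$ from the pair $(C,D)$. I expect the crux to be the equivalence threaded through (i) and (ii): that the global rank-$\ge3$ condition on $\rho^\vee$ is exactly what makes the cokernel a genuine line bundle (equivalently, $C$ and $X$ smooth), since this is what both the adjugate argument and the blow-up criterion rest on. Everything else is bookkeeping with Riemann--Roch and the diagram \eqref{eqn: huge-fkn-diagram}.
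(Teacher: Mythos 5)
Your overall strategy is the same as the paper's: both arguments run through the determinantal representation $\rho^\vee$ of the plane quartic, use the adjugate and the commutativity of \eqref{eqn: huge-fkn-diagram} to control ranks and to identify the embedding $C\hookrightarrow\P(V^\vee)$ in (i) and (ii), and recover the pair $(C,D)$ from a given complete intersection in (iii). (For (i) the paper simply cites \cite[Theorem 4.1.4 and \S 4.1.1]{CAG}; your adjugate argument is the intended one, though your justification that $\det\rho^\vee\not\equiv 0$ quietly appeals to the statement being proved. Your (iii) is a compressed version of the paper's, which in addition identifies the blown-down space curve with the determinantal quartic via the normalisation map and the Hilbert polynomial $6t-2$, and derives, rather than asserts, the smoothness of the quartic and the vanishing $h^0(L(-1))=0$.)

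The one genuine divergence, and the place where you have hidden the real work, is the final step of (ii). Having shown that $\pi$ is an isomorphism off the embedded curve and has $\P^1$-fibres over it, you conclude by a ``standard blow-up recognition criterion''. That criterion is true for threefolds, but it is not free: as stated it is essentially Mori's classification of extremal divisorial contractions (the blow-up of a smooth curve case), and to invoke it you would also need to know that $\pi$ is an extremal $K_X$-negative contraction, i.e. that $X$ has Picard rank $2$. Set-theoretic $\P^1$-fibres over a smooth curve do \emph{not} by themselves guarantee that $\pi^{-1}\I_C\cdot\O_X$ is invertible, which is what the universal property of the blow-up requires. The paper's explicit local computation with the $3\times 3$ minors $f_i$ (writing $f_2,f_3$ in terms of the two local generators $f_0,f_1$ and checking that a single $f_i$ cuts out the preimage on each chart) is precisely this Cartier-divisor verification, after which the universal property and Zariski's main theorem finish the argument. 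A complete proof must contain either that verification or the appeal to Mori theory together with the Picard-rank input; your sketch currently contains neither.
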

     \begin{proof}
         The first part of (i) is \cite[Theorem 4.1.4]{CAG}, and the rank assertion is explained in \cite[\S 4.1.1]{CAG}. 
         
         To prove (ii), first recall that the adjugate of a linear map $\varphi\colon V_1 \rightarrow V_2$ between $d$-dimensional vector spaces is the induced map $\Adj(\varphi)\colon\wedge^{d-1}V_1 \rightarrow \wedge^{d-1}V_2$. We may think of this as a map $V_1^\vee \rightarrow V_2 ^\vee$ using the obvious isomorphism $\wedge^{d-1}V_i = V_i^\vee$. Note that $\Adj\varphi$ is full rank if and only if $\varphi$ is, and that $\rk \Adj \varphi = 1$ if and only if $\rk \varphi = d-1$ (in this case $\im \Adj \varphi = \ker \varphi^\vee$ as can easily be seen). Now chasing \eqref{eqn: huge-fkn-diagram}, we find that the embedding of $C$ in $\P(V^\vee)$ is given by $x \mapsto \ker(\rho^\vee(x)\colon V \rightarrow W^\vee)$. Note that $\rk \rho^\vee(x) = 3$ by (i). 

         Now we claim the image of $C$ in $\P(V^\vee)$ is exactly the locus of points $v \in \P(V^\vee)$ where $\rho(v, -)\colon W^\vee \rightarrow H^0(\O(1))$ is not surjective. Indeed, if $v$ is the image of $x\in C$, then the composition \[W^\vee \xrightarrow{\rho(v, -)} H^0(\O(1)) \xrightarrow{x} \kk \] is the zero map. But $x$ is nonzero by assumption, so the map is degenerate. Conversely, suppose the map $\rho(v, -)\colon W^\vee \rightarrow H^0(\O(1))$ is not surjective. Then $$\rho(v, -)^\vee \colon H^0(\O(1))^\vee \rightarrow W$$ is not injective. Note that $\P(\ker\rho(v, -)^\vee) \subseteq C$ by (i), so $\P(\ker\rho(v, -)^\vee)$ is a point say $x$, and by construction $v$ is the image of $x$. 
         
         Now let us choose coordinates $\mathbf{x} = [x_0:\dots:x_3]$ for $\P(V^\vee)$ and $\mathbf{y} = [y_0:\dots:y_3] $ for $\P(W^\vee)$. We may represent the map $V^\vee \rightarrow W\otimes H^0(\O(1))$ as a matrix \[M=\begin{pmatrix}
            L_{10} & L_{11} & L_{12} & L_{13}\\
            L_{20} & L_{21} & L_{22} & L_{23}\\
            L_{30} & L_{31} & L_{32} & L_{33}
        \end{pmatrix}, \] where the $L_{ij}$'s are linear functionals in $\bf{x}$. Let $f_i$, for $i = 0,1,2,3$, denote the determinant of the submatrix of $M$ obtained by deleting the $i$-th column. Then $C$ (which we now identify with its image) is exactly the vanishing locus of these three cubics. 

        Let us now show that the preimage of this $C$ is a Cartier divisor. First note that given a general $[x_0:\cdots :x_3]\in \P(V^\vee)$, its fibre in $X$ is the point $([x_0:\cdots:x_1], [f_0:-f_1:f_2:-f_3])$, as can easily be checked. Now the question is local and since $C$ is smooth, it is (possibly after a change-of-variables) locally cut out by two of the $f_i$, say $f_0$ and $f_1$. In particular, we may write $f_2 = a_0 f_0 + a_1f_1$ and $f_3 = b_0 f_0 + b_1f_1$ where $a_i, b_i$ are local functions on $\P^3$, which also implies that $y_2 = a_0y_0 - a_1y_1$ and $y_3 = -b_0y_0 +b_1y_1$ and further that $f_0y_1+y_1f_0=0$. Thus on the locus $y_0 \neq 0$, we see that $f_1 =0$ cuts out the preimage. This shows the preimage of $C$ is a Cartier divisor. Thus by the universal property of blowups and the rank claim, the variety $X$ maps bijectively (and hence isomorphically, by Zariski's main theorem) to $\mathrm{Bl}_C\P(V^\vee)$ as claimed\footnote{Alternatively, one can observe that equation $f_0y_1+y_1f_0 = 0$ cuts out a variety isomorphic to the blowup along $C$ locally, and check that these local isomorphisms glue}. This proves (ii).

        Finally, given such a smooth complete intersection $X$ in $\P^3 \times \P^3$, we may write the three equations as in \eqref{eqn: eqns} and form the matrix $M = (L_{ij})$ as above. Then the projection $X\rightarrow \P^3$ is birational, and the indeterminacy locus of its inverse is given by the vanishing of the $f_i$ as above. Since $X$ is smooth, this indeterminacy locus is also smooth. In particular, for any $\mathbf{x}$ in this locus, we have $\rk M(\mathbf{x}) = 2$, since the singular locus is precisely the locus of points with $\rk < 2$ \cite[Proposition 4.2.2]{CAG}. Thus its fibre in $X$ would be $ \P(\ker M(\mathbf{x})) \cong \P^1$. In particular, this indeterminacy locus is of pure dimension 1 (i.e. a curve), which we shall denote $C$. Finally, according to \cite[\S7.2.2]{CAG}, the Hilbert polynomial of $C$ is $6t-2$ - in particular $C$ is a genus 3 curve.

        Now consider the determinant quartic $C' := \{\det(xM_1 + yM_2 + zM_3) = 0\}$ in $\P^2$. Let us define a map $C \rightarrow C'$ given by $$\mathbf{x} \mapsto \ker M(\mathbf{x})^\vee.$$ This is birational with inverse \[[x:y:z]\mapsto \ker(xM_1 + yM_2 + zM_3), \] and hence identifies $C$ with $\tilde{C'}$, the normalisation of $C'$. But $C'$ is already arithmetic genus 3, and hence must be normal already. One then checks that the variety obtained from the plane quartic $C = C'$ and the degree 2 divisor $\O_C(1) - K_C$ using the recipe at the beginning of this section is exactly the $X$ we started with. This concludes the proof.
     \end{proof}

    \section{Proof of Theorem \ref{thm: not-equivariant}}
    Denote $E$ denote the class of the exceptional divisor of the blowup $\pi\colon X\rightarrow \P^3$ and let $H$ be the pullback of the hyperplane class on $\P^3$. Then $\Pic(X) = \Z H \oplus \Z E$. Similarly, we have a second exceptional surface $E'$ and a second hyperplane class $H'$ coming from $\pi' \colon X \rightarrow \P^3$. Now $$H' \sim 3H-E, E' \sim 8H-3E,$$ and moreover $H$ and $H'$ span the nef cone. Note that $$K_X \sim -4H+E \sim -H-H',$$ in particular the embedding in $\P^3 \times \P^3$ (or more precisely its composition with the Segre embedding in $\P^{15}$) is anticanonical.
     We begin with an equivalent condition for $G$-Fanoness. 
    \begin{lemma}\label{lemma: G-Fano_criterion}
        Let $G$ be a group acting faithfully on $X$. Then $X$ is $G$-Fano if and only if $G$ contains an element $\sigma$ that swaps $H$ and $H'$.
    \end{lemma}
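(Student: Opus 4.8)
The plan is to reduce the statement to the elementary observation that the $G$-action on the rank-two lattice $\Pic(X)$ can only be the identity or the involution swapping $H$ and $H'$. First I would record that $H$ and $H'$ form a $\Z$-basis of $\Pic(X)$: since $H' \sim 3H - E$ we have $E \sim 3H - H'$, so $\Z H \oplus \Z H' = \Z H \oplus \Z E = \Pic(X)$. Because automorphisms preserve the canonical class, $-K_X \sim H + H'$ is fixed by all of $G$, so $\Z[-K_X] \subseteq \Pic^G(X)$ unconditionally; the content of the lemma is therefore to decide exactly when this inclusion is an equality.

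The key step is to pin down the possible actions of a single $g \in G$ on $\Pic(X)$. Since $g$ is an automorphism, $g^*$ preserves nef-ness and hence preserves the nef cone. As $\Pic(X)$ has rank two, the nef cone is a two-dimensional cone with exactly two extremal rays, which by hypothesis are $\mathbb{R}_{\ge 0}H$ and $\mathbb{R}_{\ge 0}H'$, with primitive nef generators $H$ and $H'$ respectively. Thus $g^*$ permutes the set $\{H, H'\}$, and since these form a $\Z$-basis, $g^*$ acts either as the identity or as the involution $\tau$ interchanging $H$ and $H'$. In other words, the homomorphism $G \to \mathrm{GL}(\Pic(X))$ has image contained in $\{1, \tau\}$.

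It then remains to compute the relevant fixed lattices. A class $aH + bH'$ is $\tau$-invariant precisely when $a = b$, so $\Pic(X)^{\langle \tau \rangle} = \Z(H + H') = \Z[-K_X]$. Consequently, if some $\sigma \in G$ swaps $H$ and $H'$, then $\Pic^G(X) \subseteq \Pic(X)^{\langle \sigma \rangle} = \Z[-K_X]$, which forces equality and shows $X$ is $G$-Fano. Conversely, if no element swaps $H$ and $H'$, then by the previous paragraph every $g \in G$ acts trivially on $\Pic(X)$, whence $\Pic^G(X) = \Pic(X)$ has rank two and $X$ is not $G$-Fano. This establishes both directions.

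I expect the only genuine input — as opposed to bookkeeping — to be the identification of the extremal rays of the nef cone with $\mathbb{R}_{\ge 0}H$ and $\mathbb{R}_{\ge 0}H'$, which is exactly the fact quoted just before the lemma that $H$ and $H'$ span the nef cone; everything downstream is a routine lattice calculation. The one subtlety to keep honest is that a nef-cone-preserving lattice automorphism fixing an extremal ray must fix its \emph{primitive nef} generator, since the competing option $H \mapsto -H$ is excluded as $-H$ is not nef; this is what upgrades ``fixes each ray'' to ``acts as the identity on $\Pic(X)$''.
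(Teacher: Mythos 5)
Your proof is correct and follows essentially the same route as the paper: both arguments hinge on the fact that $H$ and $H'$ span the nef cone, so any automorphism permutes $\{H,H'\}$, and then the two cases ($G$ preserves $H$ and $H'$ individually, versus some $\sigma$ swaps them) give $\Pic^G(X)$ of rank two or $\Z[-K_X]$ respectively. You simply make explicit the lattice bookkeeping ($H,H'$ form a $\Z$-basis, the $\tau$-fixed sublattice is $\Z(H+H')$) that the paper leaves implicit.
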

    \begin{proof}
        Since $H$ and $H'$ span the nef cone, this $G$ must preserve the set $\{H, H'\}$. If $G$ does not contain such a $\sigma$, then $G$ must preserve $H$ and $H'$ individually, whence $H, H'\in \Pic^G(X)$ so $\Pic^G(X) = \Z [H] \oplus \Z [H']$. Conversely, if $G$ contains such a $\sigma$, then $\Pic^G(X) = \Z[ -K_X] = \Z[H+H']$.        
    \end{proof}

    Let us assume $X$ is $G$-Fano, and let $\sigma\in G$ be an element which swaps $H$ and $H'$. It clearly suffices to prove (the nontrivial direction of) Theorem \ref{thm: not-equivariant} with $G = \langle \sigma \rangle$, so we assume this going forward.

    Now by assumption, we may choose an isomorphism $\alpha\colon\Ox(\sigma^* H) \iso \Ox(H')$. This in turn induces an isomorphism $$\beta\colon \Ox(\sigma^* H') = \Ox(\sigma^*(-K_X)) \otimes\Ox( \sigma^*(-H)) \iso \Ox(-K_X) \otimes \Ox( -H') = \Ox(H).$$ Then $\beta \circ\alpha$ (or more precisely $\beta \circ \sigma^*\alpha$) is a linearisation of the $\sigma^2$-action on $\Ox(H)$, and this induces a representation of $\langle \sigma^2 \rangle$ on $H^0(X, \Ox(H)) $, which splits as a direct sum of characters (i.e. is diagonalisable). Picking coordinates $x_0,\dots,x_3$ on $H^0(X, \Ox(H))$ such that $\sigma^2$ acts as  $$\sigma^2\cdot(x_0,\dots ,x_3) = (\varpi^{r_0} x_0,\dots ,\varpi^{r_3} x_3),$$ where $\varpi$ is a primitive $|\sigma|$-root of unity,  set $y_i := \alpha(x_i)\in H^0(X, \Ox(H'))$. Then $G$ acts on $H^0(X, \Ox(H)) \oplus H^0(X, \Ox(H'))$ as 
    \begin{equation}\label{eqn: g-action}
        \sigma\cdot ((x_0,\dots ,x_3), (y_0,\dots ,y_3)) = ( (y_0,\dots ,y_3),  (\varpi^{r_0} x_0,\dots ,\varpi^{r_3} x_3))  
    \end{equation}
    And this induces an action of $G$ on $\P^3 \times \P^3$ for which the embedding $X \hookrightarrow \P^3 \times \P^3$ is equivariant.

    This also induces a $G$-action on the two-dimensional linear system of $(1,1)$-divisors on $\P^3 \times \P^3$ which contain $X$. Explicitly, it is given by 
    \begin{equation}\label{eqn: g-action-p2}
        \sigma \cdot \mathbf{x}^TM_i \mathbf{y} = \mathbf{x}^T \diag( \varpi ^{r_0},\dots,\varpi^{r_3}) M_i^T \mathbf{y}, 
    \end{equation}
    where the $M_i$ are as in \eqref{eqn: eqns}.
    \begin{lemma}\label{lemma: g-invariant-divisors}
        If $X$ is $G$-Fano, then we may choose our three $(1,1)$-divisors to be $G$-invariant.
    \end{lemma}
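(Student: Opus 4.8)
The plan is to identify the induced $\sigma$-action on the space of $(1,1)$-forms with a single linear operator of finite order, and then to exploit the fact that a finite-order endomorphism of a vector space over $\kk$ is diagonalisable. Write $D := \diag(\varpi^{r_0},\dots,\varpi^{r_3})$, and let $\tau$ be the linear endomorphism of $\mathrm{Mat}_{4\times 4}(\kk)$ defined by $\tau(M) := D M^{T}$; by \eqref{eqn: g-action-p2} this operator computes the $\sigma$-action on $(1,1)$-forms, in the sense that $\sigma\cdot(\mathbf{x}^{T} M \mathbf{y}) = \mathbf{x}^{T}(\tau M)\mathbf{y}$. Let $U \subseteq \mathrm{Mat}_{4\times 4}(\kk)$ be the three-dimensional subspace of matrices whose associated form vanishes on $X$, so that $X$ is the base locus of the net $\P(U)$; since $\sigma$ preserves $X$, the subspace $U$ is $\tau$-invariant. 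Now a choice of three $G$-invariant $(1,1)$-divisors cutting out $X$ is nothing but a basis of $U$ consisting of eigenvectors of $\tau$: if $\tau(M) = \lambda M$ then $\sigma\cdot(\mathbf{x}^{T} M\mathbf{y}) = \lambda\,\mathbf{x}^{T} M\mathbf{y}$, so the divisor $\{\mathbf{x}^{T} M\mathbf{y} = 0\}$ is $\sigma$-invariant, while any basis of $U$ cuts out the base locus $X$. Thus the lemma reduces to the assertion that $\tau|_{U}$ is diagonalisable.

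I would then prove that $\tau$ itself is diagonalisable, which is enough: the restriction of a diagonalisable operator to an invariant subspace is again diagonalisable, since the minimal polynomial of $\tau|_{U}$ divides that of $\tau$ and hence has simple roots. To see that $\tau$ is diagonalisable it suffices to check that it has finite order, and this is a direct computation: one finds $\tau^{2}(M) = DMD$ and, by induction, $\tau^{2k}(M) = D^{k} M D^{k}$. As $\varpi$ is a primitive $|\sigma|$-th root of unity we have $D^{|\sigma|} = I$, so $\tau^{2|\sigma|} = \mathrm{id}$. A finite-order operator over the algebraically closed field $\kk$ of characteristic zero is diagonalisable, so $\tau$ is.

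Finally, picking a basis $M_1, M_2, M_3$ of $U$ consisting of eigenvectors of $\tau|_{U}$, the three divisors $\{\mathbf{x}^{T} M_i \mathbf{y} = 0\}$ are $\sigma$-invariant and cut out $X$, proving the lemma. I do not anticipate a serious obstacle here; the only points demanding care are the bookkeeping in \eqref{eqn: g-action-p2} that realises the $\sigma$-action as the operator $\tau(M) = DM^{T}$, and the standard observation that semisimplicity of $\tau$ descends to the invariant subspace $U$.
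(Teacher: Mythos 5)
Your proposal is correct and follows essentially the same route as the paper: both identify the net of $(1,1)$-forms cutting out $X$ as a $\sigma$-invariant subspace (the paper phrases this as the kernel of the equivariant restriction map $H^0(\P^3\times\P^3,\O(1,1))\to H^0(X,\Ox(1,1))$) and then diagonalise the finite-order action on it. Your explicit verification that $\tau(M)=DM^T$ has finite order, together with the descent of semisimplicity to the invariant subspace, just spells out what the paper compresses into ``since $G$ is cyclic, its action is diagonalisable.''
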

    \begin{proof}
        The three equations (\ref{eqn: eqns}) cutting out $X$ is exactly the kernel of the restriction map $H^0(\P^3 \times \P^3, \O(1,1))\rightarrow H^0(X, \Ox(1,1))$ which is clearly equivariant. Since $G$ is cyclic, its action on $\ker(H^0(\P^3 \times \P^3, \O(1,1))\rightarrow H^0(X, \Ox(1,1)))$ is diagonalisable, and choosing an invariant basis, we obtain the result.
    \end{proof}
    \subsection{The $G$-action on $C$ and $\mathrm{J}_C$} \label{subsection: G-action-X}
    Let us now turn our attention to the $G$-action on $C$. By Lemma \ref{lemma: g-invariant-divisors}, we may choose our three divisors $F_i :=\mathbf{x}^TM_i\mathbf{y}=0$ to be $G$-invariant, say $\sigma$ acts on $F_i$ with eigenvalue $\varpi^{s_i}$. Identifying the two-dimensional linear system of $(1,1)$-divisors containing $X$ with $\P^2_{x,y,z}$ using this basis as in Proposition \ref{prop: 3fold-facts} and identifying $C$ with the plane quartic  \[ \det(xM_1 + yM_2 + zM_3) = 0,\] this in turn induces a $G$-action on $C$, hence a $G$-representation structure on $$\Lie(\mathrm{J}_C) = H^0(C, K_C).$$ Write $\chi_{r} $ for the character $G \rightarrow \Gm$ sending $\sigma$ to $\varpi^r$.
    \begin{proposition}
        We have \[\Lie(\mathrm{J}_C) = \bigoplus_{j = 1}^3\chi_{s_j +(\sum_{i = 1}^3 s_i) - (\sum_{i = 0}^3r_i) }. \]
    \end{proposition}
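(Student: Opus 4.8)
The plan is to compute the character of $G$ on $H^0(C, K_C)$ by relating it to the canonical sheaf of the plane quartic and tracking how $\sigma$ acts on the various pieces.

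The plan is to exploit the fact that $C$ is a smooth plane quartic, so that adjunction supplies a canonical, hence $G$-equivariant, isomorphism $K_C \cong \bigl(K_{\P^2}\otimes\O_{\P^2}(C)\bigr)|_C$; this is equivariant precisely because $\sigma$ preserves $C$. I would then compute the character of $H^0(C, K_C)$ as a product of three contributions, one for each summand in the stated exponent: the restriction of the linear forms on $\P^2$, the twist coming from $K_{\P^2}$, and the twist coming from the $G$-linearisation of $\O_{\P^2}(C)$. The heart of the matter is that none of these line bundles carries its ``obvious'' linearisation, and matching them up correctly is what produces the three terms $s_j$, $\sum_{i=1}^3 s_i$ and $-\sum_{i=0}^3 r_i$.

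First I would identify $H^0(C, \O_C(1))$ with $H^0(\P^2, \O_{\P^2}(1))$ as $G$-representations. From the restriction sequence $0\to\O_{\P^2}(-3)\to\O_{\P^2}(1)\to\O_C(1)\to 0$ together with the vanishing of $H^0$ and $H^1$ of $\O_{\P^2}(-3)$, this restriction is a $G$-isomorphism, and the coordinate $x_j$ is an eigenvector of weight $s_j$; this yields the leading $s_j$. Next I would compute the linearisation of the canonical bundle: the $G$-equivariant Euler sequence gives $K_{\P^2}\cong\O_{\P^2}(-3)\otimes\det H^0(\P^2,\O_{\P^2}(1))$, and since $\det H^0(\P^2,\O_{\P^2}(1))$ is the one-dimensional representation of weight $s_1+s_2+s_3$, this contributes the term $\sum_{i=1}^3 s_i$.

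The decisive step is the linearisation of $\O_{\P^2}(C)$. Writing $Q:=\det(xM_1+yM_2+zM_3)$ for the defining quartic, I would compute the weight of $Q$ under $\sigma$ using $\det N=\det N^{T}$ together with the relation $M_i^{T}=\varpi^{s_i}\diag(\varpi^{-r_0},\dots,\varpi^{-r_3})M_i$, which follows by comparing \eqref{eqn: g-action-p2} with the eigenvalue assumption $\sigma\cdot F_i=\varpi^{s_i}F_i$. This gives $Q(\varpi^{s_1}x,\varpi^{s_2}y,\varpi^{s_3}z)=\varpi^{\sum_{i=0}^3 r_i}Q(x,y,z)$, so $Q$ is a semi-invariant of weight $\sum_{i=0}^3 r_i$. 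Because adjunction uses the canonical linearisation of $\O_{\P^2}(C)$, for which the section cutting out $C$ is invariant, the trivialisation $\O_{\P^2}(C)\cong\O_{\P^2}(4)$ sending that section to $Q$ is equivariant only after twisting by $\chi_{-\sum_{i=0}^3 r_i}$; this is exactly where the term $-\sum_{i=0}^3 r_i$ enters. Assembling the three contributions gives $\bigl(K_{\P^2}\otimes\O_{\P^2}(C)\bigr)|_C\cong\O_C(1)\otimes\chi_{\sum_{i=1}^3 s_i-\sum_{i=0}^3 r_i}$, and passing to global sections yields the claim. I expect the main obstacle to be precisely this bookkeeping of equivariant linearisations: each of the three line bundles must be carried with its correct $G$-action, and a sign slip in any of them — most easily in the weight of $Q$, or in whether $\det H^0(\O_{\P^2}(1))$ or its dual appears in $K_{\P^2}$ — would corrupt the final answer.
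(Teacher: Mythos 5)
Your proof is correct and follows essentially the same route as the paper: both rest on the adjunction identification $H^0(C,K_C)=H^0(\P^2,K_{\P^2}(C))$ together with the key computation that $\sigma$ acts on the determinantal quartic $\det(xM_1+yM_2+zM_3)$ with weight $\sum_{i=0}^3 r_i$ (which the paper states directly and you derive from $M_i^T=\varpi^{s_i}\diag(\varpi^{-r_j})M_i$). The only difference is presentational — the paper evaluates $\sigma^*$ on the explicit basis of rational $2$-forms $\tfrac{du\wedge dv}{\det(M_1+uM_2+vM_3)}$, $u\tfrac{du\wedge dv}{\det}$, $v\tfrac{du\wedge dv}{\det}$ in an affine chart, whereas you track the linearisations of the tensor factors $\O(1)$, $K_{\P^2}$ and $\O(C)$ globally — and your weight bookkeeping agrees with the paper's in every term.
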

    \begin{proof}
        See also the proof of \cite[Theorem 3.3]{singular_cubic_vanya}. By adjunction, we have $H^0(C, K_C) = H^0(\P^2, K_{\P^2}(C)).$ Set $u := y/x$ and $v :=  z/x$. Then the two-forms \[\frac{du\wedge dv}{\det(M_1 + uM_2 + vM_2)}, u\frac{du\wedge dv}{\det(M_1 + uM_2 + vM_2)}, v\frac{du\wedge dv}{\det(M_1 + uM_2 + vM_2)} \] form a basis for $H^0(\P^2, K_{\P^2}(C))$.

        Let us compute the $G$-action on these forms. Since the $G$-action on $\P^2$ is induced by \eqref{eqn: g-action-p2}, the automorphism $\sigma$ acts on $\det(xM_1 + yM_2 + zM_3)$ with eigenvalue $\varpi^{\sum r_i}$. It therefore acts on $$\det(M_1 + uM_2 + vM_3) = \frac{\det(xM_1 + yM_2 + zM_3)}{x^4}$$ with eigenvalue $\varpi^{-4s_1+\sum r_i}$. Moreover we have $$\sigma^*(du \wedge dv) = \varpi^{-2s_1 + s_2 + s_3} du \wedge dv. $$ Combining, it follows 
        \begin{align*}
            \sigma^*\left(\frac{du\wedge dv}{\det(M_1 + uM_2 + vM_2)}\right) &= \varpi^{2s_1 + s_2 + s_3 - \sum r_i},\\
            \sigma^*\left(u\frac{du\wedge dv}{\det(M_1 + uM_2 + vM_2)}\right) &= \varpi^{s_1 + 2s_2 + s_3 - \sum r_i},\\
            \sigma^*\left(v\frac{du\wedge dv}{\det(M_1 + uM_2 + vM_2)}\right) &= \varpi^{s_1 + s_2 + 2s_3 - \sum r_i}
        \end{align*}
        as claimed.
    \end{proof}

    \subsection{The action on the intermediate Jacobian}
    The key technical result is the following:
    \begin{proposition}\label{prop: ij}
        Let $\IJ_X$ be the intermediate Jacobian of $X$. Then $$ \Lie( \IJ_X) = \bigoplus_{j = 1}^3\chi_{s_j +(\sum_{i = 1}^3 s_i) - (\sum_{i = 0}^3 r_i)}\otimes \sgn,$$ where $\sgn: G \rightarrow \Gm$ is the sign representation which sends $\sigma$ to $-1$. In particular, the three-dimensional $G$-representations $\Lie(\IJ_X)$ and $\Lie(\J_C)$ differ by a sign, and therefore cannot be isomorphic.
    \end{proposition}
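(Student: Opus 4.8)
The plan is to compute $\Lie(\IJ_X)$ intrinsically via the Griffiths residue description of the Hodge structure on $H^3(X)$, in parallel with the computation of $\Lie(\J_C)$ above; the sign will arise from the fact that $\sigma$ interchanges the two factors of $\p^3\times\p^3$. First I record that, since $X$ is Fano we have $H^{3,0}(X)=0$, and since $H^3(\p^3\times\p^3)=0$ all of $H^3(X)$ is primitive; consequently the tangent space to the intermediate Jacobian is
\[
\Lie(\IJ_X)=H^3(X,\C)/F^2H^3(X,\C)=H^{1,2}(X).
\]

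Next I would apply the residue calculus to the complete intersection $X=V(F_1,F_2,F_3)\subseteq\p^3\times\p^3$. Write $\Omega=\Omega_{\mathbf{x}}\wedge\Omega_{\mathbf{y}}$ for the canonical form of bidegree $(4,4)$, where $\Omega_{\mathbf{x}}=\sum_i(-1)^i x_i\,dx_0\wedge\cdots\wedge\widehat{dx_i}\wedge\cdots\wedge dx_3$ and $\Omega_{\mathbf{y}}$ is its counterpart in the $\mathbf{y}$-variables. The conjugate piece $H^{2,1}(X)$ is then spanned by the residues of the pole-order-$4$ forms
\[
\omega_j=\frac{\Omega}{F_1^{a_1}F_2^{a_2}F_3^{a_3}},\qquad (a_1,a_2,a_3)\ \text{a permutation of}\ (2,1,1),
\]
the numerator being forced to be constant by a bidegree count; there are exactly three such forms, matching $h^{2,1}(X)=\dim\J_C=3$.

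The heart of the matter is the action of $\sigma$ on $\Omega$. By \eqref{eqn: g-action} the automorphism interchanges the two factors, acting by $\sigma^*x_i=y_i$ and $\sigma^*y_i=\varpi^{r_i}x_i$, whence $\sigma^*\Omega_{\mathbf{x}}=\Omega_{\mathbf{y}}$ and $\sigma^*\Omega_{\mathbf{y}}=\varpi^{\sum_i r_i}\Omega_{\mathbf{x}}$. Since $\Omega_{\mathbf{x}}$ and $\Omega_{\mathbf{y}}$ are both $3$-forms, transposing them costs the sign $(-1)^{3\cdot3}=-1$, so that
\[
\sigma^*\Omega=\varpi^{\sum_i r_i}\,\Omega_{\mathbf{y}}\wedge\Omega_{\mathbf{x}}=-\varpi^{\sum_i r_i}\,\Omega;
\]
this is precisely the source of the sign in the statement. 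Combining with $\sigma^*F_i=\varpi^{s_i}F_i$ (Lemma \ref{lemma: g-invariant-divisors}) gives $\sigma^*\omega_j=-\varpi^{-m_j}\omega_j$ with $m_j=s_j+\sum_{i=1}^3 s_i-\sum_{i=0}^3 r_i$, so $\sigma$ acts on $H^{2,1}(X)$ by the eigenvalues $-\varpi^{-m_j}$. Finally, since $\sigma^*$ is defined over $\R$ it commutes with complex conjugation, and $H^{1,2}(X)=\overline{H^{2,1}(X)}$; as the $\varpi^{m_j}$ are roots of unity, conjugation turns $-\varpi^{-m_j}$ into $-\varpi^{m_j}$, giving $\Lie(\IJ_X)=\bigoplus_j\chi_{m_j}\otimes\sgn$, the asserted twist of $\Lie(\J_C)=\bigoplus_j\chi_{m_j}$.

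For the concluding non-isomorphism, note that $\sigma$ genuinely swaps $H$ and $H'$, so it has even order and $\sgn=\chi_{h}$ with $h:=|\sigma|/2\not\equiv0$ and $2h\equiv0$. An isomorphism $\Lie(\J_C)\cong\Lie(\J_C)\otimes\sgn$ would force the multisets $\{m_j\}$ and $\{m_j+h\}$ to agree, hence $\sum_j m_j\equiv\sum_j(m_j+h)$, i.e.\ $3h\equiv0$; but $3h\equiv h\not\equiv0$, a contradiction. Thus the two representations differ by a nontrivial twist and cannot be isomorphic. I expect the bookkeeping, rather than any conceptual point, to be the main obstacle: one must justify the residue calculus for a complete intersection in the \emph{product} $\p^3\times\p^3$ (it is this product structure that makes $\Omega$ split as a wedge of two odd-degree forms, and hence makes the interchange sign visible), correctly single out $H^{1,2}$ rather than $H^{2,1}$ as $\Lie(\IJ_X)$, and track the complex conjugation---equivalently, the $G$-action on the Tate twist in the blow-up isomorphism $H^3(X)\cong H^1(C)(-1)$---against the holomorphic-forms normalisation used for $\Lie(\J_C)$.
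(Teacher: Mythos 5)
Your proposal is correct in substance and arrives at the same answer, but by a genuinely different route. The paper never touches Hodge theory directly: it runs the conormal sequence $0 \to \I/\I^2 \to \Omega^1_{\P^3\times\P^3}|_X \to \Omega^1_X \to 0$, kills $H^2$ and $H^3$ of the middle term via the Euler and Koszul sequences, splits $\I/\I^2$ into the three $G$-linearised line bundles $\L_i \cong \Ox(-1,-1)$ determined by the invariant divisors $F_i$, and identifies each $H^3(\L_i)$ with $H^6(\O_{\P^3\times\P^3}(-4,-4))$ by the twisted Koszul complex; the sign is the $(-1)^{3\times 3}$ from transposing $\pi^*\eta_x$ and $\pi'^*\eta_y$ in the K\"unneth cup product. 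Your sign has exactly the same source --- transposing the two odd-degree forms $\Omega_{\mathbf x}$ and $\Omega_{\mathbf y}$ --- and your exponent bookkeeping ($\sigma^*\omega_j = -\varpi^{-m_j}\omega_j$ with $a_j=2$, then conjugating to pass from $H^{2,1}$ to $H^{1,2}$) reproduces the stated characters. Your closing determinant argument ($\det$ of an odd-dimensional representation changes under a nontrivial sign twist) is a welcome explicit justification of ``differ by a sign, therefore not isomorphic,'' which the paper leaves implicit.

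The one genuine gap is the load-bearing step you yourself flag: the claim that $F^2H^3(X) = H^{2,1}(X)$ is spanned by residues of $\Omega/(F_1^{a_1}F_2^{a_2}F_3^{a_3})$ with $\sum a_i = 4$. Griffiths' theorem as usually stated is for hypersurfaces in $\P^n$; for a codimension-$3$ complete intersection in $\P^3\times\P^3$ you need either the Cayley trick (passing to the associated $(1,1,1)$-hypersurface in a $\P^2$-bundle over $\P^3\times\P^3$ and descending the residue description), or the toric complete-intersection versions of Batyrev--Cox and Mavlyutov, or the complete-intersection residue calculus of Konno/Terasoma/Dimca. None of these is difficult to invoke, and once surjectivity of the residue map onto $F^2H^3_{\mathrm{prim}}$ is granted the dimension count $3 = h^{2,1}(X)$ forces your three forms to be a basis of eigenvectors; but as written this is an assertion, not a proof, and it is precisely the step the paper's coherent-cohomology argument is designed to avoid (everything there reduces to the elementary Lemmas \ref{lemma: truncation} and \ref{lemma: vanishing-h2h3}). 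Secondarily, be careful that the comparison with $\Lie(\J_C)$ in the paper is made against $H^0(C,K_C)$ rather than $H^1(C,\O_C)$; your conjugation step handles this on the $X$-side, and since the final contradiction only uses the parity of the dimension it is insensitive to this choice, but a fully rigorous write-up should fix one normalisation of $\Lie$ on both sides of the blow-up isomorphism $H^3(X)\cong H^1(C)(-1)$.
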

    We proceed with a few lemmas:
    \begin{lemma}\label{lemma: truncation}
        Let $0 \rightarrow\F^{-n} \rightarrow\dots \rightarrow \F^0 \rightarrow \F^{1} \rightarrow 0$ be an exact sequence of sheaves on $X$. Let $0\leq s \leq n$. If for every $r\in \{0,...,n\}\setminus \{s\}$ we have $H^i(X, \F^{-r}) = 0$ for all $i\in \Z$ then we have $H^i(X, \F^{1}) = H^{i+s}(X, \F^{-s})$ for all $i\in \Z$.
    \end{lemma}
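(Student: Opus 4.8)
The plan is to argue by dévissage: split the exact sequence into short exact sequences and chase cohomology, accumulating one degree shift at each acyclic step. Write $d^j\colon \F^j \to \F^{j+1}$ for the differentials and set $Z^j := \ker d^j$. Exactness gives $Z^j = \im d^{j-1}$ for every $j$, together with $Z^{-n} = 0$ (injectivity at the left end) and $Z^1 = \F^1$ (surjectivity at the right end), and it yields the family of short exact sequences
\[
0 \to Z^j \to \F^j \to Z^{j+1} \to 0, \qquad -n \le j \le 0,
\]
whose long exact sequences in cohomology drive the whole argument.

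First I would climb the complex from the left. For every index $j \ne -s$ in $\{-n, \dots, 0\}$ the sheaf $\F^j$ is acyclic by hypothesis, so its long exact sequence collapses to isomorphisms $H^i(X, Z^{j+1}) \cong H^{i+1}(X, Z^j)$ for all $i$. Starting from $Z^{-n} = 0$ and applying these for all $j < -s$ shows that $Z^{-s}$ is acyclic. The single non-acyclic term $\F^{-s}$ then enters through $0 \to Z^{-s} \to \F^{-s} \to Z^{-s+1} \to 0$: since $Z^{-s}$ is acyclic, the long exact sequence gives $H^i(X, Z^{-s+1}) \cong H^i(X, \F^{-s})$. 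Finally, climbing back up through the acyclic terms $\F^{-s+1}, \dots, \F^0$ accumulates exactly $s$ further shifts, yielding $H^i(X, \F^1) = H^i(X, Z^1) \cong H^{i+s}(X, \F^{-s})$, as required.

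The argument is purely formal, so I anticipate no genuine obstacle; the only point needing care is the index bookkeeping — checking that precisely $s$ shifts are accumulated, and that the degenerate cases are subsumed by the same formula. When $s = 0$ the upward climb is empty and the conclusion is the unshifted isomorphism $H^i(X, \F^1) \cong H^i(X, \F^0)$, while when $s = n$ the equality $Z^{-n} = 0$ makes $Z^{-s}$ acyclic with no downward climb at all. As a slicker alternative I would invoke the first hypercohomology spectral sequence $E_1^{p,q} = H^q(X, \F^p) \Rightarrow \mathbb H^{p+q}(X, \F^1)$ of the resolution $[\F^{-n} \to \cdots \to \F^0]$ of $\F^1$; it degenerates because only the column $p = -s$ survives, giving the result at once, though at the cost of hiding the elementary mechanism above.
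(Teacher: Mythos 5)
Your proposal is correct and is essentially the paper's argument: both split the long exact sequence into the short exact sequences $0 \to \ker d^j \to \F^j \to \im d^j \to 0$ and chase degree shifts through the acyclic terms (the paper packages the climb as an induction on $n$ via truncations, which unwinds to exactly your two-stage dévissage), and both note the hypercohomology spectral sequence as a slicker alternative. Your index bookkeeping, including the degenerate cases $s=0$ and $s=n$, checks out.
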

  
    \begin{proof}
        Let us induct on $n$. If $n=0$ the result is trivial. Now supposing true for $n-1$, we consider the exact sequence $0 \rightarrow\F^{-n} \rightarrow\dots \rightarrow \F^0 \rightarrow \F^{1} \rightarrow 0$. Write $d^i$ for the map $d^i\colon \F^i \rightarrow \F^{i+1}$. We may truncate the sequence to obtain the following exact sequences: 
        \begin{equation}\label{eqn: left-truncation}
            0 \rightarrow  \F^{-n}\rightarrow ... \rightarrow \F^{-s-1}\rightarrow \im d^{-s-1}\rightarrow 0,
        \end{equation}
        \begin{equation}\label{eqn: middle-truncation}
            0 \rightarrow \im d^{-s-1} = \ker d^{-s} \rightarrow \F^{-s}\rightarrow \im d^{-s}\rightarrow 0, 
        \end{equation}
        and 
        \begin{equation}\label{eqn: right-truncation}
            0 \rightarrow \im d^{-s} \rightarrow \F^{-s+1} \rightarrow ... \rightarrow \F^0 \rightarrow \F^1 \rightarrow 0.
        \end{equation}
        By induction applied to (\ref{eqn: left-truncation}), it follows that $\im d^{-s-1}$ has no cohomology in any degree. It thus follows from (\ref{eqn: middle-truncation}) that $H^i(X, \F^{-s}) = H^i(X, \im d^{-s})$ for all $i\in \Z$. Finally, it follows from the inductive hypothesis that $H^{i+s}(X, \im d^{-s}) = H^i(X, \F^1)$ for all $i\in \Z$, as desired.
    \end{proof}
    \begin{remark}
        This also follows from the $E_1$-hypercohomology spectral sequence $E_1^{p,q} = H^q(X, \F^{p}) \Rightarrow \bb{H}^{p+q}(X, \F^\bullet)$. See \cite[\href{https://stacks.math.columbia.edu/tag/012K}{Tag 012K}]{stax}.
    \end{remark}

    \begin{lemma}\label{lemma: vanishing-h2h3}
        We have \[H^2(X, \Omega^1_{\P^3 \times \P^3}|_X) = 0 = H^3(X, \Omega^1_{\P^3 \times \P^3}|_X). \]
    \end{lemma}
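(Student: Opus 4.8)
The plan is to compute the cohomology on the ambient product $\P^3\times\P^3$ by resolving $\Omega^1_{\P^3\times\P^3}\otimes\O_X$ with a Koszul complex and showing that all of its terms except the one in cohomological degree $0$ are acyclic. Since $X$ is a smooth, hence regular, complete intersection of three sections of $\O(1,1)$ by Proposition~\ref{prop: 3fold-facts}, the Koszul complex gives an exact resolution
$$0\to\O(-3,-3)\to\O(-2,-2)^{\oplus 3}\to\O(-1,-1)^{\oplus 3}\to\O_{\P^3\times\P^3}\to\O_X\to 0.$$
Writing $\Omega:=\Omega^1_{\P^3\times\P^3}$ and tensoring by the locally free sheaf $\Omega$ preserves exactness, producing
$$0\to\Omega(-3,-3)\to\Omega(-2,-2)^{\oplus 3}\to\Omega(-1,-1)^{\oplus 3}\to\Omega\to\Omega|_X\to 0.$$
This is precisely of the shape required by Lemma~\ref{lemma: truncation}, with $n=3$, $\F^1=\Omega|_X$, $\F^0=\Omega$ and $\F^{-r}=\Omega(-r,-r)^{\oplus\binom{3}{r}}$ for $r=1,2,3$; note that cohomology is unchanged under pushforward along the closed immersion $X\hookrightarrow\P^3\times\P^3$, so it suffices to work on the product.

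The main step is to verify that $\F^{-r}$ is acyclic for each $r\in\{1,2,3\}$. Using the product decomposition $\Omega=p_1^*\Omega^1_{\P^3}\oplus p_2^*\Omega^1_{\P^3}$, the twist $\Omega(-r,-r)$ splits as a sum of $p_1^*(\Omega^1_{\P^3}(-r))\otimes p_2^*\O(-r)$ and $p_1^*\O(-r)\otimes p_2^*(\Omega^1_{\P^3}(-r))$. By the Künneth formula each summand has its cohomology expressed through the factor $H^\bullet(\P^3,\O(-r))$, and this vanishes in every degree for $r\in\{1,2,3\}$, since $-r$ is neither $\geq 0$ nor $\leq -4$. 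Hence both summands, and therefore $\F^{-r}$, are acyclic for $r=1,2,3$.

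With this in hand, Lemma~\ref{lemma: truncation} applied with $s=0$ gives $H^i(X,\Omega|_X)=H^i(\P^3\times\P^3,\Omega)$ for all $i$. A final Künneth computation, using $H^q(\P^3,\Omega^1_{\P^3})=\kk$ for $q=1$ and $0$ otherwise together with $H^q(\P^3,\O)=\kk$ for $q=0$ and $0$ otherwise, shows that $H^i(\P^3\times\P^3,\Omega)$ is concentrated in degree $i=1$; in particular $H^2=H^3=0$, which is the assertion. I do not expect any genuine obstacle here: once $\Omega$ is decomposed over the product, everything reduces to the elementary vanishing of $H^\bullet(\P^3,\O(-r))$ in the range $1\le r\le 3$. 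The only point demanding care is the bookkeeping of the bidegree twists $\O(-r,-r)$ in the Koszul complex and the verification that the relevant Künneth factor always lands in the acyclic window $-4<-r<0$.
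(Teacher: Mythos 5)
Your proof is correct, but it takes a genuinely different route from the paper's. The paper never tensors the Koszul complex with $\Omega^1_{\P^3\times\P^3}$; instead it restricts the (sum of the two) Euler sequences to $X$, obtaining $0\to\Omega^1_{\P^3\times\P^3}|_X\to\O_X(-1,0)^4\oplus\O_X(0,-1)^4\to\O_X^2\to0$, computes $H^\bullet(\O_X)$, $H^\bullet(\O_X(-1,0))$ and $H^\bullet(\O_X(0,-1))$ from the twisted Koszul resolutions via Lemma~\ref{lemma: truncation}, and reads off the vanishing of $H^2$ and $H^3$ from the long exact sequence. Your argument instead tensors the Koszul resolution by the locally free $\Omega^1_{\P^3\times\P^3}$, splits $\Omega^1_{\P^3\times\P^3}(-r,-r)$ over the product, and kills each summand by the K\"unneth factor $H^\bullet(\P^3,\O(-r))=0$ for $1\le r\le 3$ --- a clean observation, since no Bott-type vanishing for $\Omega^1_{\P^3}(-r)$ is ever needed. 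The trade-off: the paper's route only ever touches cohomology of line bundles, whereas your final step invokes $H^q(\P^3,\Omega^1_{\P^3})$ (Bott's formula or the Hodge numbers of $\P^3$); in exchange you get the full answer $H^i(X,\Omega^1_{\P^3\times\P^3}|_X)=\kk^2$ for $i=1$ and $0$ otherwise, not just the two vanishings. The bookkeeping points you flag (the multiplicities $\binom{3}{r}$, the harmlessness of pushing forward along the closed immersion, and applying Lemma~\ref{lemma: truncation} with $s=0$) are all handled correctly.
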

    \begin{proof}
        Recall the Euler sequence on $\P^3$: \[0 \rightarrow \Omega^1_{\P^3} \rightarrow \O_{\P^3}(-1)^4 \rightarrow \O_{\P^3} \rightarrow 0. \] Combining a copy of this sequence from each copy of $\P^3$ and restricting  to $X$, we obtain the following exact sequence on $X$: 
        \begin{equation}\label{eqn: euler}
            0 \rightarrow \Omega^1_{\P^3 \times \P^3}|_X\rightarrow \O_{X}(-1, 0)^4 \oplus \O_{X}(0, -1)^4 \rightarrow \O_{X}^2 \rightarrow 0. 
        \end{equation} Now since $X$ is a complete intersection in $\P^3 \times \P^3$, we may take its Koszul resolution: \begin{equation}\label{eqn: koszul}
            0 \rightarrow \O_{\P^3 \times \P^3}(-3, -3) \xrightarrow{d^{-3}} \O_{\P^3 \times \P^3}(-2, -2)^3 \xrightarrow{d^{-2}} \O_{\P^3 \times \P^3}(-1, -1)^3 \xrightarrow{d^{-1}} \O_{\P^3 \times \P^3} \xrightarrow{d^0} \Ox \rightarrow 0.
        \end{equation}
        More generally, by twisting (\ref{eqn: koszul}) we obtain exact sequences: 
        \begin{equation}\label{eqn: koszul-twisted}
            0 \rightarrow \O_{\P^3 \times \P^3}(-3+a, -3+b) \xrightarrow{d^{-3}_{a,b}} \dots \xrightarrow{d^{-1}_{a,b}} \O_{\P^3 \times \P^3}(a,b) \xrightarrow{d^0_{a,b}}\Ox(a,b) \rightarrow 0
        \end{equation}for each $a,b\in \Z$.
        Now applying Lemma \ref{lemma: truncation} to (\ref{eqn: koszul}) with $s = 0$ and $n = 3$ (noting that by the K\"unneth formula we have $H^i(\O_{\P^3\times \P^3}(-r, -r) = 0$ for all $1 \leq r \leq 3$) we find that 
        \begin{equation}\label{eqn: cohomology-ox}
           H^i(X, \Ox) =H^i(\O_{\P^3 \times \P^3}) = \begin{cases}
                \C & \text{if }i=0,\\
                0 &\text{otherwise}.
            \end{cases}
        \end{equation}
        Now applying Lemma \ref{lemma: truncation} to (\ref{eqn: koszul-twisted}) with $a = -1, b = 0$, we have 
        \begin{equation}\label{eqn: cohomology-ox(-1,0)}
            H^i(\Ox(-1, 0)) = 0 = H^i(\Ox(0, -1))
        \end{equation} for all $i\in \Z$ (the second equality holds by symmetry).
        
        The result now follows from (\ref{eqn: cohomology-ox}) and (\ref{eqn: cohomology-ox(-1,0)}) applied to the long exact sequence of (\ref{eqn: euler}).    
        
    \end{proof}
    
    \begin{proof}[Proof of Proposition \ref{prop: ij}]
        Recall that $\Lie(\IJ_X) = H^2(X, \Omega_X^1)$. This follows directly from the Hodge diamond.
        Let us consider the conormal sheaf sequence. \[0 \rightarrow \I/\I^2\rightarrow \Omega^1_{\P^3 \times \P^3}|_X \rightarrow \Omega^1_X \rightarrow 0. \] Since the map $\Omega^1_{\P^3 \times \P^3}|_X \rightarrow \Omega^1_X$ is clearly equivariant, the conormal sheaf $\I/\I^2$ naturally inherits a $G$-linearisation. In particular, each of its cohomologies inherit the structure of a $G$-representation. Thus by Lemma \ref{lemma: vanishing-h2h3}, there is an isomorphism of $G$-representations $H^3(X, \I/\I^2) \cong H^2(X, \Omega^1_X)$.
        
        By Lemma \ref{lemma: g-invariant-divisors}, we may choose our three (1,1) divisors to be $G$-invariant. Since $X$ is the complete intersection of these divisors, this means the conormal sheaf $\I/\I^2$ decomposes into three $G$-invariant line bundles, say $\I/\I^2 = \bigoplus_{i = 1}^3 \L_i$ each abstractly isomorphic to $\Ox(-1, -1)$ and each inheriting a linearisation determined by the $G$-action on $F_i = \mathbf{x}^T M_i \mathbf{y}$. In particular, on $\L_i$ which corresponds to $F_i$, the element $\sigma$ acts on a local generator with eigenvalue $\varpi^{s_i}$.

        Let us consider the twisted Koszul resolution (\ref{eqn: koszul-twisted}) with $a = b = -1$ again. Since our divisors are invariant, it is clear that for all linearisations of $\Ox(-1, -1)$, we can linearise $\O_{\P^3 \times \P^3}(r,r)$ for $-4 \leq r \leq -1$, so that the maps in (\ref{eqn: koszul-twisted}) are equivariant. In particular, applying Lemma \ref{lemma: truncation}, we find isomorphisms of $G$-representations $$H^3( \L_i) \cong H^6(\O_{\P^3 \times \P^3}(-4, -4))$$ for each $i$. Let us compute the $G$-action on $H^6(\O_{\P^3 \times \P^3}(-4, -4))$ for each $i$.

        Set $\eta_x := (x_0x_1x_2x_3)^{-1} \in H^3(\P^3, \O(-4))$, and similarly with $\eta_y$. By the K\"unneth formula \cite[\href{https://stacks.math.columbia.edu/tag/0BED}{Tag OBED}]{stax}, a basis element of $H^6(\O_{\P^3 \times \P^3}(-4, -4))$ is given by $\pi^*(\eta_x) \cup \pi'^*(\eta_y)$. Our $\sigma$ sends this element to $$\varpi^{-\sum r_i}\pi^*(\eta_y) \cup \pi'^*(\eta_x) = (-1)^{3\times 3}\varpi^{-\sum r_i}\pi^*(\eta_x) \cup \pi'^*(\eta_y) = -\varpi^{-\sum r_i}\pi^*(\eta_x) \cup \pi'^*(\eta_y).$$ However, since our $\O_{\P^3 \times \P^3}(-4, -4)$ is obtained from a Koszul resolution, it has local generator $F_1 \wedge F_2 \wedge F_3$. Our $\sigma$ acts on this local generator with eigenvalue $\varpi^{\sum s_i}$. However, the $\sigma$ also acts on a local generator of $\L_i$ with eigenvalue $\varpi^{s_j}$ as explained above. Thus in conclusion $\sigma$ acts on the one-dimensional $H^6(\O_{\P^3 \times \P^3}(-4, -4)) \cong H^3(\L_i)$ with eigenvalue $-\varpi^{s_j + (\sum_{i = 1}^3 s_i) - (\sum_{i = 0}^3 r_j)},$ as asserted.
    \end{proof}
    \begin{remark}
        In fact, the same calculation shows that if $\sigma$ does not swap $H$ and $H'$, then $\Lie \IJ_X$ and $\Lie \J_C$ are equivariantly isomorphic. Indeed, the sign comes from the swapping of $\eta_x$ and $\eta_y$ in the cup product.
    \end{remark}
    \begin{proof}[Proof of Theorem \ref{thm: not-equivariant}]
        Firstly recall that $\Aut(X) $ sits in an exact sequence \[1 \rightarrow \Aut(\P^3, C) \rightarrow \Aut(X) \rightarrow \mumu_2, \] where the final map is surjective if and only if $\Aut(X)$ contains an element which swaps $E$ and $E'$ - see \cite[\S 1]{2-12}. Now suppose $X$ is not $G$-Fano. Then $G$ is contained in $\Aut(\P^3, C) \subseteq \Aut(X)$, and thus the blow-down $\pi\colon X \rightarrow \P^3$ of $E$ is $G$-equivariant.
        
        We now suppose that $X$ is $G$-Fano. Then by Lemma \ref{lemma: G-Fano_criterion} $G$ contains an element $\sigma$ which swaps $H$ and $H'$. It clearly suffices to assume $G = \langle \sigma \rangle$. Now suppose that $\phi\colon X\dashrightarrow\P^3$ is a $G$-equivariant birational map, where $G$ acts linearly on $\P^3$. By \cite[Theorem 1.3.3]{abramovich2019functorial} (see also \cite[Prop. 3.6]{KreschAndrew2022Ebta}), $\phi$ factors equivariantly as:
        $$\phi=\varphi_l\circ\varphi_{l-1}\circ\dots\circ\varphi_1,$$
        where for each $i$, either $\varphi_i$ or $\varphi_i^{-1}$ is a blow-up of a smooth $G$-invariant centre. 
        
        Since $X$ is the blow-up of $\P^3$ along the curve $C $, then by \cite[Lemma 8.1.2.]{parshin1999algebraic} its intermediate Jacobian $\IJ_X$ may be identified with the Jacobian $\mathrm{J}_C$ of $C$. Thus since $X$ has Picard rank two, then by the Torelli theorem and the uniqueness the decomposition of principally polarised abelian varieties into simple components, we have that one of the $\varphi_i$ is a blowup along a $G$-invariant curve which is isomorphic to $C$. In particular, we have a $G$-action $\alpha_1\colon G \times C \rightarrow C$ such that the isomorphism $\IJ_X \cong \J_C$ is $G$-equivariant. Let us show this cannot be the case.

        As explained in \S \ref{subsection: G-action-X}, the $G$-action on $X$ induces a $G$-action $\alpha_2\colon G \times C\rightarrow C$ on $C$. By Proposition \ref{prop: ij}, the $G$-action on $\Lie(\IJ_X) = \Lie(\J_C)$ induced by $\alpha_1$ and the $G$-action induced by $\alpha_2$ differ by a sign and are not isomorphic. But since $C$ is a plane quartic, the action on $C$ is determined by the action on $\P(H^0(C, K_C)) = \P(\Lie(\J_C))$. In particular, $\alpha_1$ and $\alpha_2$ must be the same group action, which is absurd, since their induced actions on $H^0(C, K_C)$ do not agree.
    \end{proof}

	\bibliographystyle{alpha}
	\bibliography{bib}
\end{document}